\definecolor{bgcolor}{rgb}{0.8,1,1}
\definecolor{bgcolor2}{rgb}{0.68,0.78,0.81}
\def\<#1,#2>{\langle #1,#2\rangle}
\begin{document} 
\title{Real Acceleration of Communication Process in Distributed Algorithms with Compression\thanks{The research was supported by Russian Science Foundation (project No. 23-11-00229).}}
\titlerunning{Real Acceleration in Distributed Algorithms with Compression}
% If the paper title is too long for the running head, you can set
% an abbreviated paper title here
%
\author{Svetlana Tkachenko\inst{1}
Artem Andreev\inst{1}
Aleksandr Beznosikov\inst{1,2}
Alexander Gasnikov\inst{1,2}
}
\authorrunning{S. Tkachenko, A. Andreev, A. Beznosikov, A. Gasnikov}
% First names are abbreviated in the running head.
% If there are more than two authors, 'et al.' is used.
%
\institute{Moscow Institute of Physics and Technology, Moscow, Russia
}
\institute{Institute for Information Transmission Problems, Moscow, Russia
}
\maketitle              % typeset the header of the contribution
\begin{abstract}

Modern applied optimization problems become more and more complex every day. Due to this fact, distributed algorithms that can speed up the process of solving an optimization problem through parallelization are of great importance. The main bottleneck of distributed algorithms is communications, which can slow down the method dramatically. One way to solve this issue is to use compression of transmitted information. In the current literature on theoretical distributed optimization, it is generally accepted that as much as we compress information, so much we reduce communication time. But in reality, the communication time depends not only on the size of the transmitted information, but also, for example, on the message startup time. In this paper, we study distributed optimization algorithms under the assumption of a more complex and closer-to-reality dependence of transmission time on compression. In particular, we describe the real speedup achieved by compression, analyze how much it makes sense to compress information, and present an adaptive way to select the power of compression depending on unknown or changing parameters of the communication process.

\keywords{distributed optimization \and compression \and acceleration}
\end{abstract}
\section{Introduction}

Modern realities pose more and more complex optimization problems that need to be solved. For example, to improve the generalization of deployed models, machine learning engineers need to rely on training datasets of ever increasing sizes and on elaborate large-scale over-parametrized models \cite{arora2018optimization}. Therefore, it is increasingly necessary to resort to the use of distributed approaches to solving the optimization problem. The essence of distributed optimization is to the process of streamlining the target function by using multiple computing resources that are scattered across different machines or servers. It enables optimization algorithms to run in parallel, which can greatly increase the speed and efficiency of finding the optimal solution. Therefore, distributed optimization is widely used in various domains, including machine learning, data science, and operations research \cite{verbraeken2020survey}. 

% Major technology companies, such as Google, Amazon, Microsoft, Facebook, and Apple, utilize distributed optimization to improve the processing and analysis of large amounts of data, to manage networks and servers, and to save energy and resources in their data centres. All of these methods can effectively address large-scale problems requiring extensive computing resources. This is particularly advantageous when dealing with massive data sets or complex models \cite{arora2018optimization}.

However, when utilizing parallel computation in a distributed optimization environment, a common challenge is the communication between the computational devices. Since the agents function independently, they must exchange information to harmonise their local solutions and revise the global solution. Meanwhile, communication time is a waste that prevents full parallelization. Therefore, to struggle for effective communication and to address the communication bottleneck issue is a key point in distributed optimization \cite{konevcny2016federated,smith2018cocoa,ghosh2020communication}.

Employing compression of forwarded information is one of the viable solutions to decrease communication expenses \cite{seide20141,alistarh2017qsgd}. It assists in reducing file size while preserving important information. With the use of effective compression algorithms, transmission time can be considerably reduced both in theory and in practice \cite{gorbunov2021marina}.

Several models describing the dependence of transmission time on message size can be found in literature. The most frequently utilized model in the theoretical optimization is $T = \beta s,$ where $T$ is the transmission time, $\beta$ is the delay-size relationship, and $s$ is the message size. Meanwhile, there is a more practical and widespread model that has stayed away from theoretical optimization. This model is $T = \beta s + \alpha,$ where $\alpha$ is the server initialization time \cite{chan2007collective}. The simpler model indicate that transmission time can be reduced by a factor of $n$ by transmitting $n$ times less information. Nevertheless, the practical results contrast with the theoretical ones. In actuality, messaging involves initializing the channel, which refers to establishing a connection between the sender and the recipient. The second model accounts for this. This implies that there is minimal distinction when transmitting 1 or 2 bits, but once we send 100 Mb and 200 Mb, the variance is substantial. Therefore, we necessitate an accurate representation to characterise the communications.

Considering the issue of communication expenses and the proposed solution, the main questions of this study can be posed:

\begin{quote}
    \textit{1. Which model better describes the real world of messaging?}

    \textit{2. How does this change the theory of distributed optimization?}

    \textit{3. How can we determine the parameters of this model?} 

    \textit{4. What is the most efficient method of calculating the parameters of this model in the event of frequent data updates?}
\end{quote}

\subsection{Contributions}

\hspace{0.35cm} \textbf{More practical communication model:} Instead of the classical delay versus size model of $T=\beta s$ (where $\beta$ represents the relationship between delay and size), we have adopted a more realistic approach of $T=\beta s + \alpha$ (taking into account $\alpha$ -- the server initialization time). When a worker sends a message to the server, the channel initialization time contributes significantly to the small size of message transmission. It is, therefore, essential to consider this factor.

\textbf{Impact of model on communication complexities:} We analyze how the more practical model from the previous paragraph affects the communication costs of modern distributed algorithms with compression. We consider state-of-the-art methods that have the best theoretical guarantees for convex and non-convex problems.

\textbf{Estimate of $\alpha$, $\beta$:} In order to calculate the two coefficients $\alpha$ and $\beta$ based on the real data on the dependence of delay on message volume, we assume that $\alpha = \alpha_{const} + \delta_\alpha$ and $\beta = \beta_{const} + \delta_\alpha$ where $\delta_\alpha$ and $\delta_\beta$ follow independent normal distributions. $\alpha_{const}$ and $\beta_{const}$ are the true values of these coefficients, $\delta \alpha$ and $\delta \beta$ are errors of measurement or calculation.

Using this information, it is possible to calculate the coefficients $\alpha_{const}$ and $\beta_{const}$ through statistical techniques, such as least squares. Rather than storing the complete dataset of message size and delay time, we can update the summation of variables. This approach enables us to update just four variables without needing to recalculate the coefficients using the least squares method.

The estimation process aims to find the values of $\alpha$ and $\beta$ that best fit the observed data, thus providing insight into the server initialization time and the delay-volume relationship. 

% \textbf{Online/Adaptive estimate of $\alpha$, $\beta$:} We also assume that the data on which we calculate the coefficients $\alpha$ and $\beta$ may not be known to us in advance or can be changed online, to deal with this setting we propose an adaptive approach, which can take into these online changes.

% \subsection{Notation}

% We denote $\ell_p$-norms as $\|x\|_p :=(\sum_{i=1}^d|x_i|^p)^{\nicefrac{1}{p}}$ for $p\in(1,\infty)$ and $\ell_{\infty}$-norms as $\|x\|_{\infty} :=\max \limits_{i = 1\dots d }|x_i|$. By $\mathbb{E}[{\cdot}]$ we denote mathematical expectation.

\section{Problem statement}

We consider the optimization problems of the form: 
$$
\min \limits_{x \in \mathbb{R}^d} \left\{ f(x) := \frac{1}{n} \sum \limits_{i=1}^n f_i(x) \right\},
$$ where $x$ is the optimization variable. For example, in the context of ML, $x \in \mathbb{R}^d$ contains the parameters of the statistical model to be trained, $n$ -- number of employees/devices and functions $f_i(x) : \mathbb{R}^d \to \mathbb{R}$ -- model data loss $x$, stored on the device $i$.

\subsection{Distributed optimization with compression}

We give an illustration of the traditional use of distributed optimization, utilizing the gradient descent algorithm as an instance -- see Algorithm \ref{alg:appr1}.

\begin{algorithm}[h!]
   \caption{}
   \label{alg:appr1}
\begin{algorithmic}[1]
\State {\bf Initialization:} choose $x^0\in \mathbb{R}^d$ and stepsizes $\{\gamma_k\}_{k=0}^{K}$
\For{$k = 0, 1, \dots, K$}
    \State Server sends $x^k$ to all $n$ nodes
    \State Each $i$-th node, in parallel with the others, calculates the gradient of its corresponding function $f_i$:$$\nabla f_i(x^k)$$
    \State All nodes send $\nabla f_i(x^k)$ to the server
    \State Server performs aggregation:$$x^{k+1}=x^k- \gamma_k \cdot \dfrac{1}{n}\sum\limits_{i=1}^n\nabla f_i(x^k)$$
\EndFor  
\end{algorithmic}
\end{algorithm}

As noted above to handle large data sets, compression is necessary. In Algorithm \ref{alg:appr1} this can be represented by the compression operator $\mathcal{C} : \mathbb{R}^d \rightarrow \mathbb{R}^d $. In particular, $\nabla f_i(x^k)$ in line 4 should be replaced by $\mathcal{C}(\nabla f_i(x^k))$ and, accordingly, in line 6 we aggregate the compressed gradients $\mathcal{C}(\nabla f_i(x^k))$:
$$
x^{k+1}=x^k- \gamma_k \cdot \dfrac{1}{n}\sum\limits_{i=1}^n \mathcal{C}(\nabla f_i(x^k)).
$$
This approach is basic, but does not give the best convergence results \cite{gorbunov2020unified,beznosikov2020biased}. Once can note that more advanced methods with compression use more tricky schemes, in particular, they are based on various variance reduction techniques, which prescribe to compress not the gradient itself, but the difference between the gradient and some reference value \cite{mishchenko2019distributed,li2020acceleration,gorbunov2021marina,EF21,beznosikov2022distributed,beznosikov2022compression,beznosikov2023similarity}.

%\textbf{Problem:}

%Communication problem (costs) \vspace{0.25cm}

%\textbf{Solutions to the problem:}

%1. To speed up, reducing the number of iterations with momentum,

%2. To use local methods,

%3. To reduce the number of communication rounds by performing several local updates before each communication round,

%4. To reduce the size of transmitted messages due to the use of compression operators. \vspace{0.25cm}

The theory of convergence of methods with compression is based on a formal definition of the properties of $\mathcal{C}$ operators. In particular, two classes of operators: unbiased and biased, are often distinguished in the literature.

\begin{definition} 
    $\mathcal{C}$ is an unbiased compression with $\zeta \geq 1$ if $\mathcal{C}$ is unbiased \\ ($\mathbb{E}[\mathcal{C}(x)] = x$) and $\mathbb{E}\left[\|\mathcal{C}(x)\|^2_2\right]\leq\zeta\|x\|^2_2$ for all $x\in\mathbb{R}^d$.
\end{definition}

\begin{definition}
    $\mathcal{C}$ is a biased compression with $\delta \geq 1$ if \\  $\mathbb{E}\left[\|\mathcal{C}(x)-x\|^2_2\right]\leq\left(1-1/\delta\right)\|x\|^2_2$ for all $x\in\mathbb{R}^d$.
\end{definition}

Meanwhile, these definitions do not give a complete picture about compression operators. The definitions are interesting for proving convergence and obtaining iterative complexity of algorithms. But to obtain the communication cost in the amount of transmitted information, it is necessary to understand how much the operator reduces the transmitted information.

% This definition was also used in \cite{stich2018sparse}, \cite{cordonnier2018convex}. Both definitions require that the compressed vector $\mathcal{C}(x)$ is in the neighbourhood of the uncompressed vector $x$ such that the original information is preserved with some accuracy.

\subsection{Degree of compression}

In this subsection, we estimate the degree of compression $\omega_{inf} = \tfrac{\textnormal{len}(x)}{\textnormal{len}(\mathcal{C}(x))}$, where $\textnormal{len}(x)$ is the number of bits of information to send $x \in \mathbb{R}^d$. We consider different classical compression operators.

% Despite its popularity, $\omega_{inf}$ has not been calculated or used in the literature to study communication time. Therefore, we will try to estimate $\omega_{inf}$ for some compression operators.

% \textbf{Unbiased random sparsification}

\begin{definition}
    For $k \in [d] := \{1,\dots,d\}$, the {unbiased random  (aka Rand-$k$) sparsification} operator is defined via
    \begin{equation*}
    % \label{ex:ur-sparse}
        \mathcal{C}(x) := \frac{d}{k}\sum \limits_{i\in S}x_ie_i,
    \end{equation*}
    where $S\subseteq [d]$ is the $k$-nice sampling; i.e., a subset of $[d]$ of cardinality $k$ chosen uniformly at random, and $e_1,\dots,e_d$ are the standard unit basis vectors in $\mathbb{R}^d$.
\end{definition}

\begin{lemma}
    For the unbiased random sparsification $\omega_{inf} = \tfrac{d}{k}$.
\end{lemma}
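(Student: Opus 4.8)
The plan is to read $\omega_{inf}$ as the infimal constant $\zeta$ of Definition~1 --- the smallest number for which the unbiased second-moment bound $\E{\sqn{\mathcal{C}(x)}}\le\zeta\,\sqn{x}$ holds for every $x\in\R^d$ --- and to pin it down by an exact computation of the second moment. Since the prefactor $d/k$ in the Rand-$k$ definition is chosen precisely to make the operator unbiased, I would first record the one-line check $\E{\mathcal{C}(x)}=x$, which also fixes the correct normalization, and then evaluate $\E{\sqn{\mathcal{C}(x)}}$.

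Concretely, I would encode the sampling by indicators $\one_{\{i\in S\}}$ and write
$$
\mathcal{C}(x)=\frac{d}{k}\sum_{i=1}^{d}\one_{\{i\in S\}}\,x_i e_i .
$$
Because $e_1,\dots,e_d$ are orthonormal, the squared norm has no surviving cross terms, so
$$
\sqn{\mathcal{C}(x)}=\frac{d^2}{k^2}\sum_{i=1}^{d}\one_{\{i\in S\}}\,x_i^2 .
$$
The sole probabilistic input is the marginal inclusion probability of the $k$-nice sampling: as $S$ is a uniformly random $k$-subset of $[d]$, a fixed coordinate satisfies $\Prob(i\in S)=k/d$. Taking expectations by linearity then gives
$$
\E{\sqn{\mathcal{C}(x)}}=\frac{d^2}{k^2}\cdot\frac{k}{d}\sum_{i=1}^{d}x_i^2=\frac{d}{k}\,\sqn{x}.
$$

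It remains to argue that $d/k$ is not merely an upper bound but the infimal one: the display above is an \emph{equality} valid for all $x$, so no $\zeta<d/k$ can satisfy the Definition~1 inequality, whence the infimal admissible constant --- namely $\omega_{inf}$ --- equals exactly $d/k$. The only genuine obstacle is justifying $\Prob(i\in S)=k/d$, which I would obtain either by symmetry of the uniform sampling over all $k$-subsets or by the count $\binom{d-1}{k-1}/\binom{d}{k}=k/d$; once this marginal and the $(d/k)^2$ prefactor are combined, the coefficient collapses to $d/k$ and the rest is bookkeeping.
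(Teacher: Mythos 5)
Your computation is correct as mathematics, but it proves a different statement from the one the lemma makes. In this paper $\omega_{inf}$ is not the variance constant $\zeta$ of Definition~1; it is defined in the subsection ``Degree of compression'' as the ratio of transmitted information sizes, $\omega_{inf} = \tfrac{\textnormal{len}(x)}{\textnormal{len}(\mathcal{C}(x))}$, i.e.\ a bit-counting quantity used to model communication time, explicitly introduced because Definitions~1 and~2 ``do not give a complete picture'' of how much an operator shrinks the message. That the infimal $\zeta$ for Rand-$k$ also happens to equal $d/k$ is a well-known coincidence for this operator, but your second-moment identity $\E{\sqn{\mathcal{C}(x)}} = \tfrac{d}{k}\sqn{x}$ says nothing about how many bits must cross the wire, so it does not establish the lemma as stated. (Indeed the two quantities diverge for other operators in the paper: natural compression has $\omega_{inf} = 32/9$, which no variance calculation would produce.)

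The paper's actual proof is a counting argument with one non-obvious ingredient that your proposal never touches: the compressed vector has $k$ nonzero coordinates versus $d$, but naively one must also transmit \emph{which} $k$ coordinates were selected, costing $k\lceil \log_2 d\rceil$ extra bits and yielding $\tfrac{d\cdot\textnormal{len}(x)}{k\cdot\textnormal{len}(x) + k\lceil\log_2 d\rceil}$ --- exactly the expression the paper derives for Top-$k$. The clean ratio $d/k$ for Rand-$k$ holds only because the randomness can be synchronized for free: if sender and receiver run the same pseudorandom generator from the same seed, the receiver already knows the sampled index set $S$, so only the $k$ values need to be sent. This seed-sharing observation is the essential content of the paper's proof, and it is the step your approach, working entirely with moment bounds, has no way to supply.
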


\begin{proof}
    Initial vector $x$ contains $d$ non-zero coordinates, and the compressed one contains $k$. Then    $\omega_{inf} = \tfrac{d}{k}$. Here it is important to clarify that in the general case it is necessary to forward numbers of non-zero coordinates as well. But if the same random generator with the same seed is installed on the sending and receiving devices, it is possible to synchronize the randomness for free, and then there is no need to send additional information. 
\end{proof}
 
% \textbf{Biased random sparcification}

% \begin{definition}
%     Let $S\subseteq [d]$ be a random set, with probability vector $p := (p_1,\dots,p_d)$, where  $p_i := \mathbb{P}(i\in S)>0$ for all $i$  (such a set is called a proper sampling~\cite{PCDM}). Define {\bf biased random sparsification} operator via
%     \begin{equation}\label{ex:br-sparse}
%         \mathcal{C}(x) := \sum \limits_{i\in S} x_i e_i.
%     \end{equation}
% \end{definition}

% \begin{lemma}
%     For Biased random sparcification $\omega_{inf} = \dfrac{d}{\sum\limits^d_{i=1} p_i}$.
% \end{lemma}

% \begin{proof}
%     Initial vector $x$ contains $d$ nonzero coordinates, and the compressed one contains $k = \sum\limits^d_{i=1}p_i$. 
    
%     Then $\omega_{inf} = \dfrac{d}{\sum\limits^d_{i=1} p_i}$.
% \end{proof}
 
% \textbf{Adaptive random sparsification}
% \begin{definition}
%     {\bf Adaptive random sparsification} is defined via
% \begin{equation}\label{ex:ar-sparse}
%  \mathcal{C}(x) := x_i e_i \quad \text{ with probability } \quad \frac{|x_i|}{\|x\|_1}.
% \end{equation}
% \end{definition}

% \begin{lemma}
%     For Adaptive random sparsification $\omega_{inf} = \dfrac{d}{\sum \limits_{i=1}^d \dfrac{|x_i|}{\|x\|_1}}\geq \dfrac{\|x\|_1}{\|x\|_{\infty}}$.
% \end{lemma}

% \begin{proof}
%     Similar to the previous point $k = \sum\limits^d_{i=1}\dfrac{|x_i|}{\|x\|_1} \leq \sum\limits^d_{i=1}\dfrac{\|x\|_{\infty}}{\|x\|_1} = d\dfrac{\|x\|_{\infty}}{\|x\|_1}$. Then $\omega_{inf} = \dfrac{d}{\sum \limits_{i=1}^d \dfrac{|x_i|}{\|x\|_1}}\geq \dfrac{\|x\|_1}{\|x\|_{\infty}}$.
% \end{proof} 

% \textbf{Top-$k$ sparsification}
\begin{definition}[see \cite{alistarh2018convergence}]
    Top-$k$ sparsification operator is defined via
    \begin{equation*}
    % \label{ex:top-sparse}
        \mathcal{C}(x) := \sum \limits_{i=d-k+1}^d x_{(i)} e_{(i)},
    \end{equation*}
where coordinates are ordered by their magnitudes so that $|x_{(1)}| \leq |x_{(2)}| \leq \cdots \leq |x_{(d)}|$.
\end{definition}
Top-$k$ is a greedy version of unbiased random sparsification.
\begin{lemma}
    For Top-$k$ sparsification $\omega_{inf} = \tfrac{d \cdot \text{len}(x) }{k \cdot \text{len}(x) + k \cdot \lceil \log_2 d \rceil}$.
\end{lemma}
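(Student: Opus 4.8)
The plan is to directly compute the two bit-counts that appear in the definition $\omega_{inf} = \text{len}(x)/\text{len}(\mathcal{C}(x))$ and take their ratio. Throughout I read $\text{len}(x)$ as written in the statement, i.e.\ as the cost in bits of one real-valued coordinate, so that the full vector, having $d$ coordinates, requires $d\cdot\text{len}(x)$ bits; this is exactly the numerator we are aiming for. (It is worth flagging that this reuses $\text{len}(x)$ in a per-coordinate sense, which I would make explicit to avoid clashing with the vector-level meaning of $\text{len}(x)$ used to define $\omega_{inf}$.)

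First I would count the payload of the compressed message $\mathcal{C}(x)$. By the definition of Top-$k$, the operator keeps the $k$ coordinates of largest magnitude and zeroes out the rest, so the receiver must be told both the $k$ surviving numerical values and where they belong. The $k$ kept values are ordinary reals and therefore cost $k\cdot\text{len}(x)$ bits.

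The key step -- and the one that separates Top-$k$ from Rand-$k$ -- is accounting for the positions. In the Rand-$k$ lemma the set $S$ was produced by a random generator, so a shared seed on sender and receiver reproduces $S$ for free and the index cost vanishes. For Top-$k$ this shortcut fails: the surviving set is determined by the magnitudes of $x$ and is thus data-dependent, so it cannot be regenerated from a seed fixed in advance. Consequently the $k$ indices must be transmitted explicitly; each index names one of $d$ positions and hence costs $\lceil \log_2 d\rceil$ bits, contributing $k\cdot\lceil \log_2 d\rceil$ in total. Adding the two pieces gives $\text{len}(\mathcal{C}(x)) = k\cdot\text{len}(x) + k\cdot\lceil \log_2 d\rceil$, and dividing $d\cdot\text{len}(x)$ by this quantity yields the claimed expression.

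The hard part is not the arithmetic but the justification of the index overhead. One must argue that the data-dependence of the top-$k$ set genuinely rules out the free-synchronization trick available to Rand-$k$, and that $\lceil \log_2 d\rceil$ bits per index is the appropriate count -- necessary because each index ranges over $d$ values, and sufficient under a fixed-width encoding. Since a sharper information-theoretic encoding of the index \emph{set} (rather than each index separately) could tighten this term, I would state explicitly that the bound assumes the straightforward per-index encoding, so that the stated constant is exactly reproduced.
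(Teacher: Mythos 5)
Your proposal is correct and follows essentially the same route as the paper: count $k\cdot\text{len}(x)$ bits for the surviving values, add $k\cdot\lceil\log_2 d\rceil$ bits for the explicitly transmitted indices (which, unlike in Rand-$k$, cannot be synchronized via a shared seed because the top-$k$ set is data-dependent), and take the ratio. Your remarks on the per-coordinate reading of $\text{len}(x)$ and on the fixed-width index encoding are reasonable clarifications of conventions the paper leaves implicit, but they do not change the argument.
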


\begin{proof}
    Similar to Unbiased random sparsification initial vector $x$ contains $d$ non-zero coordinates, and the compressed one contains $k$. But here, unlike random sparsification, we have to pass the numbers of selected non-zero coordinates. To encode the numbers from $1$ to $d$, $\lceil \log_2 d \rceil$ bits are needed. The total number of transmitted bits is $k \cdot \text{len}(x) + k \cdot \lceil \log_2 d \rceil$.  Then $\omega_{inf} = \tfrac{d \cdot \text{len}(x) }{k \cdot \text{len}(x) + k \cdot \lceil \log_2 d \rceil}$.
\end{proof}

% \textbf{Natural compression}
% \begin{definition}
%     {\bf General unbiased rounding} $\mathcal{C}$ is defined as follows: if $a_k\le |x_i|\le a_{k+1}$ for some coordinate $i\in[d]$, then
%     \begin{equation}\label{ex:gu-rounding}
%         \mathcal{C}(x)_i = 
%         \begin{cases}
%             \text{sign}(x_i)a_k     & \text{ with probability } \quad \frac{a_{k+1}-|x_i|}{a_{k+1}-a_k}\\ 
%             \text{sign}(x_i)a_{k+1} & \text{ with probability } \quad \frac{|x_i| - a_{k}}{a_{k+1}-a_k}\\
%         \end{cases}
%     \end{equation}
% \end{definition}

\begin{definition}[see \cite{horvath2022natural}]
    Natural compression operator $\mathcal{C}_{nat}$ is defined as follows:
 \begin{equation*}
\mathcal{C}(x)= \begin{cases} \text{sign} (x) \cdot 2^{\lfloor{\log_2 |x|}\rfloor} , \text{ with }  p(x),\\
\text{sign} (x) \cdot  2^{\lceil{\log_2 |x|}\rceil} , \text{ with } 1-p(x),
\end{cases}
\label{def:rr_vec}
 \end{equation*}
 where probability $ p(x) := \frac{2^{\lceil{\log_2 |x|}\rceil}-|x|}{2^{\lfloor{\log_2 |x|}\rceil}}.$
\end{definition}
The essence of this compression is random rounding to the nearest power of two. In terms of computing on a computer with 32bit float type, this is simply equivalent to using only the sign bit and 8 bits from the exponent.
\begin{lemma}
    For Natural compression $\omega_{inf} = \tfrac{32}{9}$.
\end{lemma}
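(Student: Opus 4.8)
The plan is to count bits directly from the IEEE~754 single-precision layout, exactly in the style of the two preceding sparsification lemmas. First I would recall that a 32-bit float stores a nonzero scalar in three fields: one sign bit, eight exponent bits, and twenty-three mantissa (fraction) bits, so that $\text{len}(x)=32$ for the uncompressed number. This is the baseline against which the compressed length is measured in the definition of $\omega_{inf}$.

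Next I would examine the range of the operator. By the definition of $\mathcal{C}_{nat}$, every output is of the form $\text{sign}(x)\cdot 2^{m}$ with integer exponent $m\in\{\lfloor\log_2|x|\rfloor,\,\lceil\log_2|x|\rceil\}$. Such a value is an exact power of two, i.e.\ in normalized form its fraction field is identically zero. Hence the twenty-three mantissa bits carry no information and need not be transmitted; it suffices to send the sign bit together with the eight exponent bits, giving $\text{len}(\mathcal{C}(x))=1+8=9$.

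Combining the two counts yields $\omega_{inf}=\tfrac{\text{len}(x)}{\text{len}(\mathcal{C}(x))}=\tfrac{32}{9}$, as claimed. The one point that needs care — analogous to the synchronized-randomness remark in the Rand-$k$ lemma — is the encoding convention: the sender and receiver must agree in advance that each compressed scalar is a signed power of two, so that the nine transmitted bits decode unambiguously without any extra flag bits. I expect this bookkeeping (justifying that the mantissa field is provably all zeros for every output of $\mathcal{C}_{nat}$, and that no additional metadata is required) to be the only real subtlety; once it is settled, the arithmetic giving $32/9$ is immediate.
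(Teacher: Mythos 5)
Your argument is correct and matches the paper's proof, which likewise observes that a 32-bit float is replaced by the sign bit plus the 8 exponent bits, giving $\tfrac{32}{9}$; you simply spell out the IEEE~754 bookkeeping that the paper leaves implicit. No gap here.
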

\begin{proof}
The statement follows directly from the use of such compression with 32bit float. Instead of 32 bits we send 9.
\end{proof}

\begin{definition}[see \cite{vogels2019powersgd}]
    Rank-r Power compression introduced by \cite{vogels2019powersgd} is a compressed-decompressed approach approach based on the low-rank approximate decomposition of the matrix $X \in \mathbb{R}^{n \times m}$ (transformed version of the original parcel vector $x$).
    
    % an algorithm that compresses and aggregates the matrix $M \in \mathbb{R}^{n \times m}$, consisting of non vector-shaped parameters (a part of all parameters that are non vector-shaped), which 
    % a number of workers $W$ share.

    %  For each matrix $M \in \mathbb{R}^{n \times m}$, a corresponding $Q \in \mathbb{R}^{m \times r}$ is initialized from an i.i.d. standard normal distribution. $P = \frac{1}{W}(M_1 + ... + M_W)Q$.
\end{definition}

\begin{lemma}
    For Rank-r PowerSGD compression $\omega_{inf} = \tfrac{nm}{r(n+m)}$.
\end{lemma}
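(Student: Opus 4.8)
The plan is to reduce the statement to a scalar-counting argument, exactly as in the proofs of the sparsification lemmas above. First I would recall the definition $\omega_{inf} = \tfrac{\text{len}(x)}{\text{len}(\mathcal{C}(x))}$ and note that in PowerSGD the parcel vector $x$ is reshaped into the matrix $X \in \mathbb{R}^{n \times m}$; transmitting $X$ uncompressed therefore amounts to sending $nm$ scalars, contributing the numerator.

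Next I would unpack what the rank-$r$ compressed--decompressed scheme actually communicates. The low-rank approximate decomposition writes $X \approx P Q^{\top}$ with a left factor $P \in \mathbb{R}^{n \times r}$ and a right factor $Q \in \mathbb{R}^{m \times r}$; in place of the full matrix, only these two factors are sent, and together they carry $nr + mr = r(n+m)$ scalars. Then I would argue that every scalar, whether an entry of $X$ or of the factors $P$ and $Q$, is encoded with the same per-number bit length, so both bit counts are proportional to their number of scalars and this common per-number length cancels in the ratio (which is precisely why, unlike the Top-$k$ formula, no $\text{len}(x)$ term survives here). Substituting the two counts yields $\omega_{inf} = \tfrac{nm}{r(n+m)}$.

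The main obstacle is not computational but definitional: I must justify that, in contrast to Top-$k$ sparsification, no auxiliary metadata needs to be transmitted. Because the factorization is dense and its dimensions $n \times r$ and $m \times r$ are fixed and known to both sender and receiver in advance, there are no nonzero-coordinate indices to encode, so the full overhead reduces to the raw factor entries. The remaining care is to confirm that the numerical precision used for the factors matches that of the original entries, ensuring the per-scalar bit length is genuinely common and cancels, which closes the argument.
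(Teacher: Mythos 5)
Your argument is correct and matches the paper's proof exactly: both count the $nm$ scalars of $X$ against the $r(n+m)$ scalars of the factors $P\in\mathbb{R}^{n\times r}$ and $Q\in\mathbb{R}^{m\times r}$ and take the ratio. Your additional remarks on the cancellation of the common per-scalar bit length and the absence of index metadata are sound elaborations that the paper leaves implicit.
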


\begin{proof}
    The product of matrices $PQ^T, P\in\mathbb{R}^{n\times r}, Q\in\mathbb{R}^{m\times r}$ approximates the matrix $X\in\mathbb{R}^{n\times m}$, Thus, instead of storing $n\cdot m$ numbers must be $r\cdot n+r\cdot m$. Then $\omega_{inf} = \tfrac{nm}{r(n+m)}$.
\end{proof}
The results obtained above are summarized in Table \ref{tab:1}.

\vspace{-0.5cm}
\renewcommand{\arraystretch}{2.5}
\renewcommand{\tabcolsep}{6pt}
\begin{table}[h!]
\centering
% \resizebox{\linewidth}{!}{
\begin{tabular}{c|c} \hline
    \cellcolor{bgcolor2}{{{\bf Compression operator }}} & \cellcolor{bgcolor2}{{ $\omega_{inf}$}}  \\ \hline
    Unbiased random sparsification & $\dfrac{d}{k}$ 
    \\ \hline
    % Biased random sparcification \cite{beznosikov2020biased}& $\dfrac{d}{\sum_{i=1}^d p_i}$
    % \\ \hline
    % Adaptive random sparsification \cite{beznosikov2020biased}& $\dfrac{d}{\sum_{i=1}^d |x_i| / |x\|_1}\geq \dfrac{\|x\|_1}{\|x\|_{\infty}}$
    % \\ \hline
    Top-$k$ sparsification \cite{alistarh2018convergence}& $\frac{d \cdot \text{len}(x) }{k \cdot \text{len}(x) + k \cdot \lceil \log_2 d \rceil}$
    \\ \hline
    Natural compression \cite{horvath2022natural}& $\dfrac{32}{9}$
    % , where [ $\cdot$ ] means rounding to the nearest power of 2
    \\ \hline
    % General exponential dithering \cite{beznosikov2020biased}& $\dfrac{\text{len}(x)}{\text{len}(\|x\|_p) + \text{len}(b) + d\cdot(1+\lceil\log_2(s+1)\rceil)}$
    % \\ \hline
    % Natural dithering \cite{horvath2022natural}& $\dfrac{\text{len}(x)}{\text{len}(\|x\|_p) + d(1+\log_2(s+1))}$
    %  \\ \hline
    % Top-$k$ + exponential dithering \cite{beznosikov2020biased}& $\dfrac{\text{len}(x)}{\text{len}(\|x\|_p) + \text{len}(b)+ k(1+\log_2(s+1))}$
    % \\ \hline
     Rank-$r$ Power compression \cite{vogels2019powersgd}& $\dfrac{nm}{r(n+m)}$ 
     % where $M \in \mathbb{R}^{n \times m}$, $P \in \mathbb{R}^{n \times r}$, $Q \in \mathbb{R}^{m \times r}:$ $PQ^T$ approximates $M$
     \\ \hline
\end{tabular}
% }
\vspace{0.3cm}
\caption{$\omega_{inf}$ for different compression operators.}
\label{tab:1}
\end{table}
\vspace{-1.cm}
As previously stated, the estimation of communication time necessitates $\omega_{inf}$. This coefficient serves as a reliable indicator of the effectiveness of the compression operator in each instance, particularly if one knows the optimal frequency of message compression (which is the objective of this research paper).

\section{Main part}

\subsection{Transmission time model and convergence complexities}

\begin{wrapfigure}[16]{r}{8cm}
     \vspace{-1cm}
     \centering
    \includegraphics[width=0.5\textwidth]{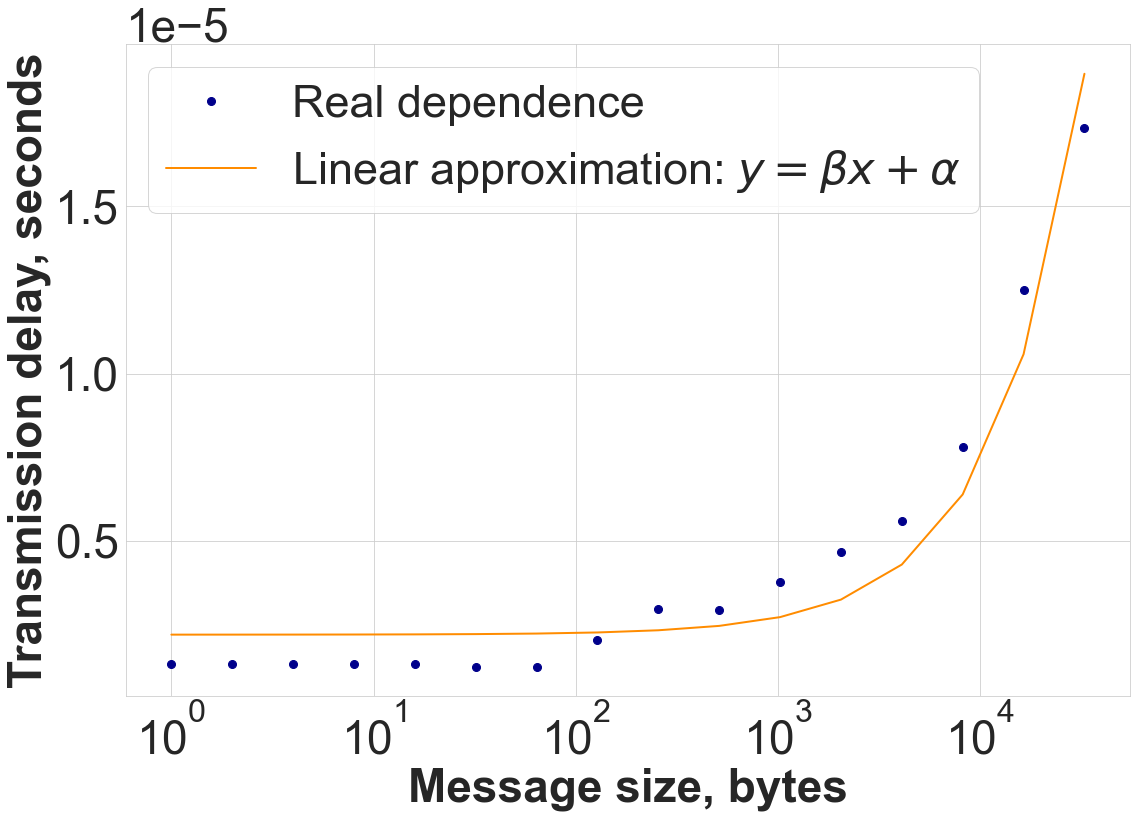}
    \caption{Dependence of communication time on the size of the transmitted messages for MSU supercomputer "Lomonosov": blue dots -- real values, orange line -- approximation.}
    \label{Approx}
\end{wrapfigure}
We consider the following model of transmission time:
$$
T(s) = \alpha + \beta \cdot s,$$
where $s$ is the size of the packages, $\beta$ represents the time to transmit one unit of the information, $\alpha$ represents the time to initialize the channel, which is the delay that occurs before any message transmission occurs. This delay may involve activities such as creating a connection, verifying the user's identity, or loading essential resources. As mentioned earlier, in papers on theoretical optimization and convergence estimates, it is assumed that $\alpha = 0$. But let us examine the plot presented in Figure~\ref{Approx}, which portrays the relationship between the transmission delay and message size in a live network. Using this plot we can see the effect of $\alpha$ on communication time.

Note that the theoretical results on the communication cost of distributed algorithms with compression depend on the parameter 
\begin{equation}
    \label{eq:beta}
    \eta = \frac{T(\text{len}(\nabla f_i (x)))}{T(\text{len}(\nabla f_i (x)) / w_{inf})}. 
\end{equation}
In particular, the best results for method with unbiased compression for convex \cite{li2020acceleration} and non-convex \cite{gorbunov2021marina} problems linear depends on $\left(\tfrac{1}{\eta} + \tfrac{\zeta}{\eta \sqrt{n}}\right)$. The state-of-the-art results for biased compression \cite{qian2021error,EF21} linear depends on $\left(\tfrac{1}{\eta} + \tfrac{\delta}{\eta}\right)$. It is easy to see that if $\alpha = 0$, the expression \eqref{eq:beta} gives $\eta = w_{inf}$. But if $\alpha \neq 0$, it is possible that $\eta = 1$, thus the impact of even a large $w_{inf}$ can be almost completely canceled. 

% The relationship between delay and size can be quantified using $\beta$ as the coefficient. Generally, as message size increases, there is a diminishing return in the increase of delay.

% The data used for this plot originates from the Parallel Processing of Large Graphs Course, taught by lecturer A. S. Semyonov, a Candidate of Technical Sciences, at Lomonosov Moscow State University.

% Let us approximate it with a straight line (orange line) through the least squares method. (Fig.~\ref{Approx}).

% For the given plot: $\beta = 5.12\cdot 10^{-10} \pm 1.21 \cdot 10^{-21} \hspace{4pt} (2 \cdot 10^{-10}\%)$ seconds/bytes, $\alpha = 2.18 \cdot 10^{-6} \pm 1.08 \cdot 10^{-13} \hspace{4pt} (5 \cdot 10^{-6} \%)$ seconds.

\subsection{Division into areas}

\begin{wrapfigure}[14]{r}{8cm}
 \vspace{-0.8cm}
\centering
\includegraphics[width=0.5\textwidth]{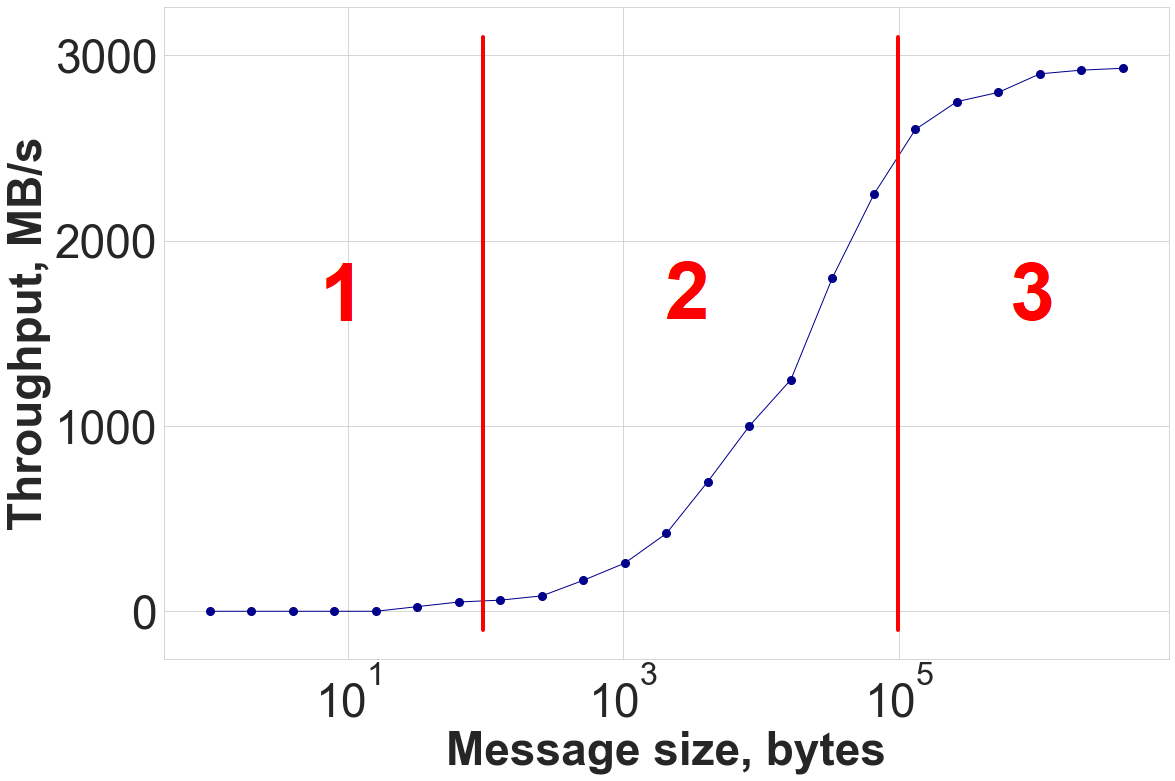}
    \caption{Division into fields according to the importance of the summands $\alpha$ and $\beta s$}
    \label{Areas}
\end{wrapfigure}

Let us examine the original plot (Figure~\ref{Approx}) and divide it into three conditional ranges (Figure~\ref{Areas}). In the first range, the coefficient $\alpha$ is the most significant. This means that if the size of the message $s$ falls within this range, then $\alpha$ greatly exceeds $\beta s$. In the second range, both coefficients $\alpha$ and $\beta$ hold value, with $\alpha$ and $\beta s$ being close in value. In the third range, $\beta$ carries the most significance, with $\beta s$ greatly exceeding $\alpha$.

Let us examine how the transmission delay varies with changes in message size (see Figure~\ref{p212}, Figure~\ref{p323}, and Figure~\ref{p31}). We determine the number of times the message size alters during compression, and subsequently how many times the transmission time changes:

$\bullet$ When transitioning between areas 3 to 3 and 3 to 2, the message is compressed by a factor of $n$, while communication time is reduced by $0,95 \cdot n$. 

$\bullet$ When transitioning between areas 2 to 2 and 2 to 1, compression is approximately 40 times greater than the reduction in communication cost. 

\begin{figure}[h!]
\begin{center}
\begin{minipage}[h]{0.32\linewidth}
    \includegraphics[width=1\textwidth]{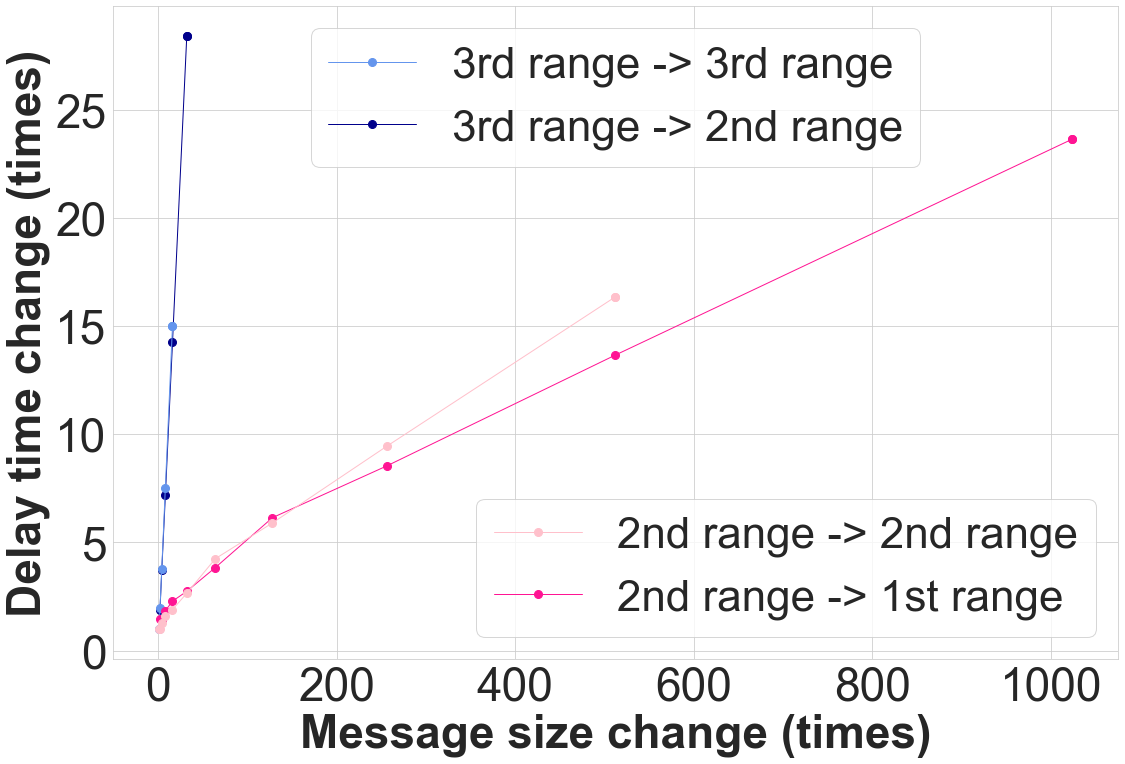}
    \caption{Transitions from area 2 to areas 1, 2}
    \label{p212}
\end{minipage}
\hfill 
\begin{minipage}[h]{0.32\linewidth}
\includegraphics[width=1\textwidth]{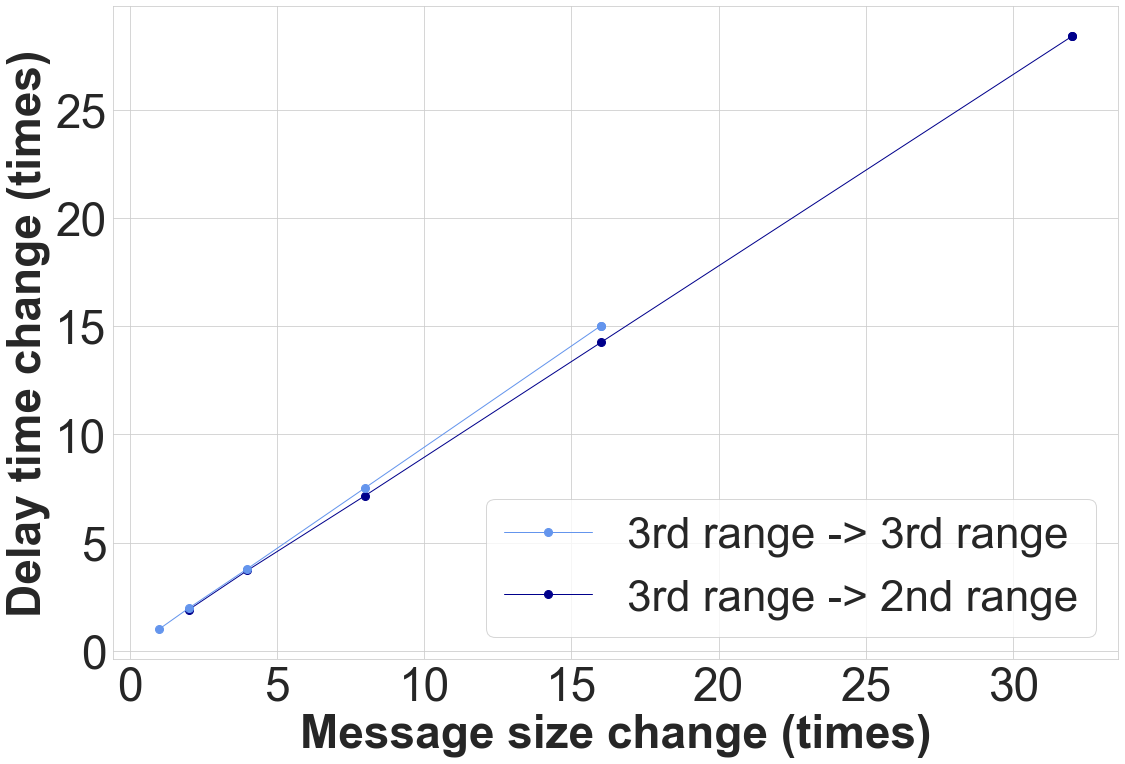}
    \caption{Transitions from area 3 to areas 2, 3}
    \label{p323}
\end{minipage}
\hfill 
\begin{minipage}[h]{0.32\linewidth}
\includegraphics[width=1\textwidth]{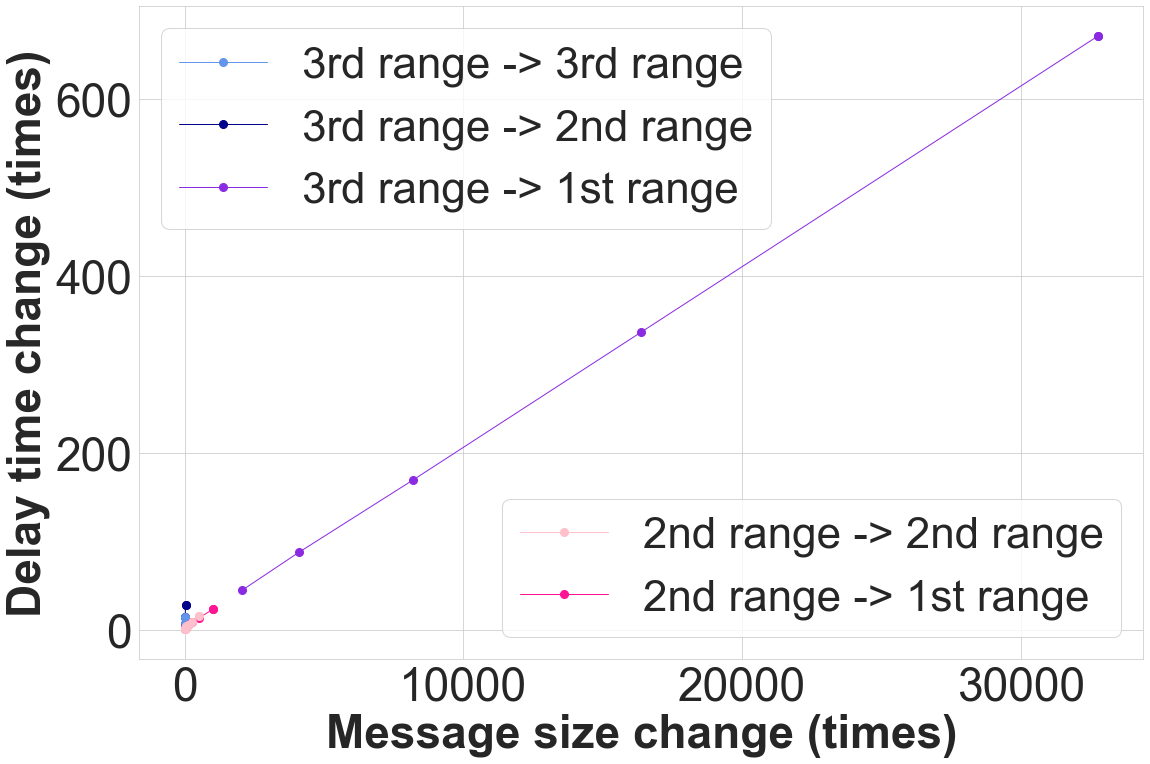}
    \caption{Transitions from area 3 to area 1}
    \label{p31}
\end{minipage}
\end{center}
\vspace{-1cm}
\end{figure}

$\bullet$ When transitioning between areas 3 to 1, the time saved is insignificant compared to the compression size (with a compression of 5000 times, the transmission time is only reduced by approximately 100 times).

\textbf{Conclusion:} It is most feasible to travel from area 3 to 3 or 2, and it is not financially viable to travel from area 3 to 1.

\begin{wrapfigure}[12]{r}{8cm}
    \vspace{-0.7cm}
    \centering
    \includegraphics[width=0.5\textwidth]{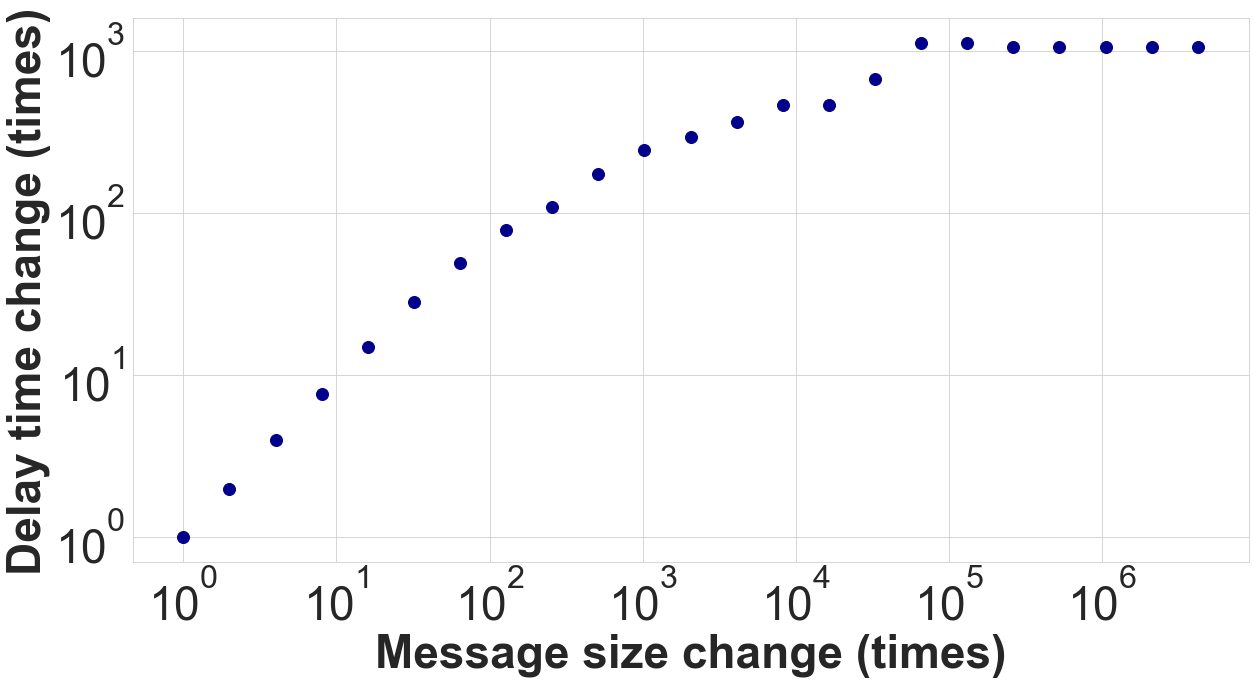}
    \caption{Transmission delay reduction on message size reduction}
    \label{comp}
\end{wrapfigure}

Let us consider an example how compression of a message affects transmission time. We use the distributed system from Figure~\ref{Approx}. The size of an uncompressed message is $10^{22}$. Figure \ref{comp}  demonstrates that the variation in time is almost linear at first, but then the compression loses its effectiveness. Hence, it can be concluded that the compression of a message has an impact on the transmission time.

\textbf{Conclusion:}  High compression does not result in a significant time savings, thus extensively compressing a message is not an efficient approach.

\subsection{A way to find $\alpha$ and $\beta$ for an unknown network parameters}

Here is a method for determining $\alpha$ and $\beta$ when the network parameters are unknown, when we can specify areas as in Figure \ref{Areas}. We formalize the problem of finding or estimating $\alpha$ and $\beta$ as follows.
 \vspace{0.25cm}
 
\textbf{Condition:} It is possible to transmit messages of varying sizes ranging from 0 to $P_{max}$, which represents the maximum message size that we can send. It is imperative to consider technological constraints when evaluating the feasibility of message transmission. For instance, for each message, the values $\alpha$ and $\beta$ are stochastic and have the laws: $\alpha(t) = \alpha_{const} + \delta \alpha$ and $\beta(t) = \beta_{const} + \delta \beta$. That is, $\alpha(t)$ and $\beta(t)$ vary among samples and consist of a constant value plus stochastic noise $\delta$ (which follows a normal distribution with mean 0 and variance $\sigma^2$, where $\sigma = \alpha_m \cdot \alpha_{const}$ or $\sigma = \beta_m \cdot \beta_{const}$ depending on the nature of $\delta$). \vspace{0.25cm}

\textbf{Suggested solution:} Let us apply the formulas of the method of least squares to recalculate $\alpha$ and $\beta$. During the operation of the main optimization algorithm we vary message sizes. Firstly, we calculate the delay at 2 points. % 0 and $P_{max}$. 
Then, for each subsequent step, we select (determistically or randomly) the next point from within the interval $(0, P_{max})$ and calculate the delay. Here is an example of code that executes this algorithm:
\vspace{-0.5cm}
\begin{algorithm}[h!]
   \caption{}
   \label{alg:appr}
\begin{algorithmic}[1]
\State {\bf Parameters:} largest message size $P_{max}$;
\State {\bf Initialization:} compute times $y_1, y_2$ for message sizes $x_1, x_2$ respectively ($y = \beta \cdot x + \alpha$). Set $s_x^2 = x_1 + x_2$, $s_y^2 = y_1 + y_2$, $s_{xy}^2 = y_1\cdot x_1 + y_2\cdot x_2$, $s_{xx}^2 = x_1\cdot x_1 + x_2\cdot x_2$;

\For{$k = 3, 4, \dots$}
    \State for new $x_k \in [0, P_{max}]$, compute $y_k$
    \State $s_x^k = s_x^{k-1} + x_k$
    \State $s_y^k = s_y^{k-1} + y_k$
    \State $s_{xy}^k = s_{xy}^{k-1} + x_k\cdot y_k$
    \State $s_{xx}^k = s_{xx}^{k-1} + x_k\cdot x_k$
    \State $\beta_k = \frac{k \cdot s_{xy}^k - s_x^k \cdot s_y^k}{k \cdot s_{xx}^k - (s_x^k)^2}$
    \State $\alpha_k = \frac{s_y^k - \beta \cdot s_x^k}{k}$
    % \label{approximation}
\EndFor  
\end{algorithmic}
% \vspace{-1cm}
\end{algorithm}
\vspace{-0.5cm}

The algorithm recalculates the $\alpha$ and $\beta$ coefficients using the least squares formulas. 
% The message size is chosen randomly each time. 
It is worth pointing out that it is very expensive to recalculate the parametrs $\alpha$ and $\beta$ using the least squares method and to store all data of message sizes and times of transmission $\{x_i, y_i \}$. But Algorithm \ref{alg:appr} can works online. We need only 4 variables: the sum of message sizes $s_x$, the sum of delays $s_y$, and the sums needed for the least squares calculation $s_{xy}$ and $s_{xx}$.

% It remains to estimate the number of iterations and the probabilities of determining $\alpha$ and $\beta$.

\begin{proposition}
    $\beta_k, \alpha_k$ from Algorithm \ref{alg:appr} are unbiased estimations of $\beta$ and $\alpha$, namely $\Bbb{E}[\beta_k] = \beta_{const}$ and $\Bbb{E}[\alpha_k] = \alpha_{const}$.
\end{proposition}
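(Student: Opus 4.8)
The plan is to recognize that Algorithm~\ref{alg:appr} produces exactly the ordinary least-squares estimator for the affine model $y=\beta x + \alpha$, and that unbiasedness then follows from the classical argument once the responses are written as signal plus a zero-mean noise. First I would rewrite each measured delay as
$$y_i = (\beta_{const} + \delta\beta_i)\,x_i + (\alpha_{const} + \delta\alpha_i) = \beta_{const}\, x_i + \alpha_{const} + \epsilon_i, \qquad \epsilon_i := \delta\beta_i\, x_i + \delta\alpha_i,$$
and observe that, because $\delta\alpha_i$ and $\delta\beta_i$ are centered normal variables, $\mathbb{E}[\epsilon_i \mid x_i] = x_i\,\mathbb{E}[\delta\beta_i] + \mathbb{E}[\delta\alpha_i] = 0$. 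This is the step where the seemingly awkward multiplicative noise $\delta\beta_i x_i$ is tamed: since the message sizes $x_i$ are chosen by us (deterministically, or at any rate independently of the measurement noise), conditioning on them turns $\delta\beta_i x_i$ into a mean-zero quantity. Consequently the combined error $\epsilon_i$ is additive and centered on $y_i$ even though its variance $\sigma_{\beta}^2 x_i^2 + \sigma_\alpha^2$ depends on $x_i$; unbiasedness of OLS requires only mean-zero errors, not homoscedasticity, so this heteroscedastic, random-slope structure causes no difficulty.

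Next I would identify $\beta_k$ and $\alpha_k$ with the OLS formulas in centered form. Writing $\bar x = s_x^k/k$ and $\bar y = s_y^k/k$, a direct manipulation of the sums (the common factor $k$ cancels between numerator and denominator) gives
$$\beta_k = \frac{k\, s_{xy}^k - s_x^k s_y^k}{k\, s_{xx}^k - (s_x^k)^2} = \frac{\sum_{i=1}^k (x_i - \bar x)\, y_i}{\sum_{i=1}^k (x_i - \bar x)^2}.$$
The crucial structural point is that, conditioned on the design points $x_1,\dots,x_k$, the denominator is a deterministic constant, so $\beta_k$ is a \emph{linear} functional of the responses. Substituting the model for $y_i$ and using $\sum_i (x_i - \bar x) = 0$ (which annihilates the $\alpha_{const}$ term) together with $\sum_i (x_i-\bar x)x_i = \sum_i (x_i - \bar x)^2$, I obtain
$$\beta_k = \beta_{const} + \frac{\sum_{i=1}^k (x_i - \bar x)\,\epsilon_i}{\sum_{i=1}^k (x_i - \bar x)^2}.$$

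Taking the conditional expectation given $x_1,\dots,x_k$ and using $\mathbb{E}[\epsilon_i\mid x_i]=0$ kills the second term, so $\mathbb{E}[\beta_k \mid x_1,\dots,x_k] = \beta_{const}$, and the tower property then yields $\mathbb{E}[\beta_k] = \beta_{const}$. For the intercept I would use $\alpha_k = \bar y - \beta_k \bar x$ (line~10 of Algorithm~\ref{alg:appr}, reading the $\beta$ there as $\beta_k$) and linearity of expectation: conditioning on the design, $\mathbb{E}[\bar y \mid x] = \beta_{const}\bar x + \alpha_{const}$ while $\bar x$ is constant, whence $\mathbb{E}[\alpha_k \mid x] = (\beta_{const}\bar x + \alpha_{const}) - \beta_{const}\bar x = \alpha_{const}$, and again the tower property removes the conditioning. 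The only point requiring care, and the main obstacle such as it is, is justifying that expectation passes through the ratio defining $\beta_k$; this is legitimate precisely because conditioning on the $x_i$ renders the denominator nonrandom and the estimator linear in the noise, so no ratio-of-random-variables or errors-in-variables complication arises.
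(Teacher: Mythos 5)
Your proof is correct and follows essentially the same route as the paper's: both treat the design points $x_i$ as deterministic (the paper pulls $s_x^k$ and the denominator out of the expectation without comment, which is your conditioning step made implicit), both reduce everything to $\mathbb{E}[y_i]=\beta_{const}x_i+\alpha_{const}$ and linearity of $\beta_k$ in the responses, and both then get $\alpha_k$ for free from $\mathbb{E}[\beta_k]=\beta_{const}$. The only cosmetic difference is that you pass through the centered form $\beta_k=\sum_i(x_i-\bar x)y_i/\sum_i(x_i-\bar x)^2$, whereas the paper expands $\mathbb{E}[s_{xy}^k]$ and $s_x^k\mathbb{E}[s_y^k]$ directly and cancels; your version additionally makes explicit why the multiplicative noise $\delta\beta_i x_i$ is harmless, a point the paper glosses over.
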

\begin{proof}
    We start from $\beta_k$:
    \begin{align}
    \label{eq:1}
        \Bbb{E}[\beta_k] 
        = 
        \Bbb{E}\left[\dfrac{k \cdot s_{xy}^k - s_x^k \cdot s_y^k}{k \cdot s_{xx}^k - (s_x^k)^2}\right]
        = \dfrac{k\cdot\Bbb{E}[s_{xy}^k] - s_x^k\cdot\Bbb{E}[s_y^k]}{k \cdot s_{xx}^k - (s_x^k)^2}.
    \end{align}
    Next, we estimate $\Bbb{E}[s_{xy}^k] $ and $s_x^k\cdot\Bbb{E}[s_{y}^k]$
    \begin{align}
    \label{eq:2}
        \Bbb{E}[s_{xy}^k] =& \Bbb{E}\left[\sum\limits_{i=1}^k(x_iy_i)\right] = \left(\sum\limits_{i=1}^k(x_i \Bbb{E}[y_i])\right) = \sum\limits_{i=1}^kx_i(\beta_{const}x_i+\alpha_{const})
        \notag\\
        = & \beta_{const}\sum\limits_{i=1}^kx_i^2 + \alpha_{const}\sum\limits_{i=1}^kx_i,
    \end{align}
    \begin{align}
    \label{eq:3}
        s_x^k\cdot\Bbb{E}[s_{y}^k] 
        =& \left(\sum\limits_{i=1}^k x_i\right)\cdot\Bbb{E}\left[\sum\limits_{i=1}^ky_i\right] = \left(\sum\limits_{i=1}^k x_i\right)\cdot \sum\limits_{i=1}^k(\beta_{const}x_i + \alpha_{const}) = 
        \notag\\
        =&\beta_{const}\left(\sum\limits_{i=1}^kx_i\right)^2 + k\cdot\alpha_{const}\sum\limits_{i=1}^k x_i.
    \end{align}
    Substituting \eqref{eq:3} and \eqref{eq:2} to \eqref{eq:1}, we get
    $$
    \Bbb{E}[\beta_k] = \dfrac{k\cdot\left(\beta_{const}\cdot s_{xx}^k + \alpha_{const}\cdot s_x^k\right) - \beta_{const}\cdot (s_x^k)^2 - k\cdot\alpha_{const}\cdot s_x^k}{k \cdot s_{xx}^k - (s_x^k)^2} = \beta_{const}.
    $$
    Finally, for $\Bbb{E}[\alpha_k]$ we obtain
    \begin{align*}
        \Bbb{E}[\alpha_k] =& \dfrac{\Bbb{E}[s_y^k - \beta_k\cdot s_x^k]}{k} = 
        \dfrac{\Bbb{E}[s_y^k] - \Bbb{E}[\beta_k]\cdot s_x^k}{k}
        \\
        =& \dfrac{\sum\limits_{i=1}^k(\beta_{const}x_i + \alpha_{const}) - \beta_{const}\cdot s_x^k}{k} 
= \dfrac{k\cdot\alpha_{const}}{k} = \alpha_{const}.
    \end{align*}
\end{proof}

\section{Conclusions}

In this paper, we considered a realistic communication cost model $T(s) =\beta s + \alpha$, which takes into account $\alpha$ -- the server initialization time. We tried to discuss how it affects to communication time complexities of algorithms. We also provided the algorithm for determining the coefficients $\alpha$ and $\beta$ utilizing statistical techniques such as the least squares method, alongside estimated uncertainties related to this approach. Rather than storing the complete message size and delay time sets, it is viable to update some combinations of the variables.

% We consider that calculating the parameters $\alpha$ and $\beta$ enables us to determine the extent of message compression required. To achieve a deeper understanding, we divided the plot into ranges and evaluated the success of the transitions between them, as demonstrated. 

% Unfortunately, it is essential to store all the data to assess the error of this technique.

% We also assessed $\omega_{inf}$ for certain compression operators, intending to facilitate operator selection for every particular case.

One can note that the model considered in this paper can also be improved. For example, we can also include the time required for the communication operator counting. In some cases, this can be quite expensive, which slows down the computational process. Taking this time into account is an important detail for future research.

\bibliographystyle{splncs04}
\bibliography{ltr}

% \newpage 

% \appendix

\end{document}